\newcommand{\MM}{\mathcal M}
 \newcommand{\OO}{\mathbf{O}} 
 \newcommand{\RR}{\mathbf{R}}  
 \newcommand{\BB}{\mathbf{B}}  
 \newcommand{\Ss}{\mathbf{S}}  
 \newcommand{\area}{\operatorname{area}}
 \newcommand{\eps}{\epsilon}
    \newtheorem{theorem}    {Theorem}     
    \newtheorem*{theorem1'}{Theorem 1$'$} 
    \newtheorem*{theorem1*}{Theorem 1$^*$}
    \newtheorem*{no-theorem}{}
    \newtheorem*{corollary}{Corollary}
    \newtheorem{claim}  {Claim}
    \theoremstyle{definition}
    \theoremstyle{definition}
\begin{document}

  \renewcommand{\marginpar}[1]{}

\renewcommand{\thesubsection}{\thetheorem}

\title[Sharp Lower Bounds on Density of Area-Minimizing Cones]{Sharp Lower Bounds on Density \\ for Area-Minimizing Cones}
\subjclass[2010]{Primary: 53A10; Secondary: 49Q05, 53C44}
\author{Tom Ilmanen}
\address{Departement Mathematik ETH Zentrum, R\"amistrasse 101 CH-8092 Z\"urich, Switzerland}
\email{ilmanen@math.ethz.ch}
\author{Brian White}
\address{Department of Mathematics\\ Stanford University\\ Stanford, CA 94305, USA}
\email{bcwhite@stanford.edu}
\thanks{The research of the second author was supported by the National Science Foundation 
  grants~DMS-0707126 and DMS1105330.}      
\begin{date}{July 23, 2010. Revised May 12, 2013.}\end{date}

\begin{abstract}
We prove that the density of a topologically nontrivial, area-minimizing hypercone
with an isolated singularity
must be greater than $\sqrt2$.  The Simons' cones show that $\sqrt2$
is the best possible constant.   
If one of the components of the complement of the cone has nontrivial $k^{\rm th}$ homotopy
group, we prove a better bound 
in terms of $k$; that bound is also best possible.  The proofs use mean curvature flow.
\end{abstract}

\maketitle

In this paper, we prove some sharp lower  bounds on densities
of area-minimizing hypercones or, equivalently, on volumes of certain closed
minimal hypersurfaces in round spheres.  
We begin by indicating why such
density bounds are of interest.
Recall that if $M$ is an $m$-dimensional minimal variety in a Riemannian manifold
and if $x$ is an interior point of $M$, then the density of $M$ at $x$
is
\begin{equation}\label{density}
   \Theta(M,x) := \lim_{r\to 0} \frac{\area(M\cap \BB(x,r)}{\omega_mr^m},
\end{equation}
where $\omega_m$ is the $m$-dimensional volume of the unit ball in $\RR^m$.
The limit exists by the monotonicity formula.  
The density is $1$ at any multiplicity $1$ regular point, and it is strictly greater than
$1$ at any singular point (by Allard's regularity theorem).
If $M$ is a cone with vertex $x$, then
the ratio in~\eqref{density} is independent of $r$; in that case, we write $\Theta(M)=\Theta(M,x)$.

This paper treats the case of area-minimizing hypersurfaces (either integral currents
or flat chains mod $2$).   Consider the following question:

\begin{quote}
{\bf Q1}.
What is the infimum of $\Theta(M,x)$
among all pairs $(M,x)$ where $M$ is an area minimizing hypersurface (in some Riemannian
manifold) and $x$ is an interior singular point of $M$?
\end{quote}

Here ``interior point of $M$'' means ``point in the support of $M$ but not in the support of $\partial M$".

Note that if $x$ is an interior singular point of $M$ and if $C$
is a tangent cone to $M$ at $x$, then $C$ is an area minimizing hypercone in Euclidean
space with a singularity at its vertex, and $\Theta(C)=\Theta(M,x)$.  Furthermore,
standard dimension reducing arguments show that either $C$ has an isolated singularity at its vertex,
or else there is another area minimizing hypercone $C'$ of lower dimension such that
$C'$ has an isolated singularity at vertex and such that $\Theta(C') \le \Theta(C)$.
Thus the question Q1 is equivalent to:

\begin{quote}
{\bf Q2}.
What is the infimum of $\Theta(C)$ among all area-minimizing hypercones $C$ such that
$C$ has an isolated singularity at the origin?
\end{quote}

In this paper, we give a sharp answer to question Q2 provided one restricts the cones $C$
to those that are topologically nontrivial.  In particular, we prove:

\begin{theorem}\label{MainTheorem}
Suppose that $C\subset \RR^n$ is an area-minimizing hypercone with an isolated singularity at the origin.
Suppose also that $C$ is topologically nontrivial in the following sense: at least one of the two components of $\RR^n\setminus C$ is non-contractible.   
Then the density of $C$ at the origin is greater than $\sqrt2$.
\end{theorem}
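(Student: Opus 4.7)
The plan is to argue by contradiction using mean curvature flow (MCF), with the cone $C$ serving as a barrier and Huisken's monotonicity formula converting the topological obstruction to an entropy bound. Suppose for contradiction that $\Theta(C) \le \sqrt 2$, and let $\Omega \subset \RR^n \setminus C$ be a non-contractible component. Using the identity $F(C) = \Theta(C)$ for the Gaussian area of a cone centered at the origin, I would construct a closed smooth hypersurface $\Sigma_0 \subset \Omega$ by capping off $\overline\Omega \cap \BB(0,R)$ with a spherical piece and rounding the corner just inside $\Omega$. Because the Gaussian weight decays rapidly outside $\BB(0,R)$, for large $R$ the entropy of $\Sigma_0$ satisfies
\[ \lambda(\Sigma_0) := \sup_{(p,T)} F_{p,T}(\Sigma_0) \;\le\; \Theta(C) + \varepsilon(R), \qquad \varepsilon(R)\to 0. \]

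Next I would run a Brakke flow $\{\Sigma_t\}$ starting at $\Sigma_0$. Since $C$ is stationary and area-minimizing (hence itself a Brakke flow), the avoidance principle confines $\Sigma_t$ to $\overline\Omega$, and by compactness the flow must develop a singularity at some first time $T_* < \infty$. The non-contractibility of $\Omega$ should prevent $\Sigma_t$ from contracting regularly inside $\Omega$, so a tangent flow $\hat\Sigma$ at some singular $(p, T_*)$ is a smooth self-shrinker in $\RR^n$ that is not a multiplicity-one hyperplane (by Brakke/White regularity). Huisken monotonicity then yields
\[ F(\hat\Sigma) = \lim_{t\nearrow T_*} F_{p,T_*}(\Sigma_t) \;\le\; F_{p,T_*}(\Sigma_0) \;\le\; \lambda(\Sigma_0) \;\le\; \sqrt 2 + \varepsilon(R). \]

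The contradiction comes from the entropy lower bound for nonplanar shrinkers: every smooth self-shrinker in $\RR^n$ other than a multiplicity-one hyperplane has Gaussian area strictly greater than $\sqrt 2$. The round-sphere shrinkers $S^k(\sqrt{2k})$ and the cylinders $S^k(\sqrt{2k}) \times \RR^{n-1-k}$ have entropies decreasing to $\sqrt 2$ but strictly exceeding it for each finite $k$; higher-multiplicity planes have entropy $\ge 2$; for exotic shrinkers one invokes a Colding--Ilmanen--Minicozzi-type classification. Letting $R \to \infty$ and promoting $\ge$ to $>$ via a compactness/rigidity step for shrinkers gives $\Theta(C) > \sqrt 2$. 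The main obstacles are: (i) making the avoidance principle rigorous near the vertex of $C$, where $C$ is singular---handled by the strict maximum principle for stationary integral varifolds or by approximation; (ii) ensuring the tangent flow $\hat\Sigma$ is non-planar, i.e.\ that the topology of $\Sigma_0$ is not lost during the flow; and (iii) establishing the strict bound $F(\hat\Sigma) > \sqrt 2$ uniformly over all relevant tangent shrinkers, which I expect to be the crux of the argument.
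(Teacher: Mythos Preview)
Your overall strategy---run a mean curvature flow inside one complementary region, locate a singularity, and compare the Gaussian density there with $\Theta(C)$ via Huisken monotonicity / upper semicontinuity---is indeed the paper's strategy. But the mechanism you propose for step (iii) is not the one that makes the argument work, and as stated your proof has a genuine gap there.

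The paper does \emph{not} appeal to any lower bound on the entropy of general nonplanar self-shrinkers. Instead it arranges for the flow to be \emph{mean-convex}, so that by White's classification every tangent flow at a singularity is a shrinking cylinder $\Ss^k\times\RR^{n-1-k}$, whose Gaussian density is the explicit number $d_k>\sqrt2$. Mean-convexity is obtained by starting not from an ad hoc capped surface but from a leaf $L_{-\eps}$ of the Hardt--Simon foliation associated to the area-minimizing cone $C$: these leaves are smooth, minimal, lie entirely in one component of $\RR^n\setminus C$, and foliate it. One then flows $L_{-\eps}\cap\BB$ with moving boundary sliding toward $\partial L_{\eps}$; because the initial surface is minimal and the boundary moves monotonically, the evolving surface is strictly mean-convex for $t>0$. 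The topological hypothesis forces a singularity exactly as you suggest, and the cylinder bound $d_k\ge d_{n-2}>\sqrt2$ finishes the proof via upper semicontinuity of Gaussian density as $\eps\to0$.

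Your version forfeits mean-convexity, so the tangent flow at your singular point need not be a cylinder---indeed it need not even be a \emph{smooth} shrinker, only a self-similar Brakke flow. The assertion that every such object has entropy $>\sqrt2$ is precisely what you flag as ``the crux,'' and it is not available: a Colding--Ilmanen--Minicozzi--White type statement covers closed smooth shrinkers, but not arbitrary (possibly noncompact, possibly singular) shrinking Brakke flows in all dimensions. Without that, the argument does not close. Secondary issues you raise---avoidance near the singular vertex, and controlling $\lambda(\Sigma_0)$ for a hand-built capped surface---are also neatly sidestepped by the Hardt--Simon leaves, which are smooth barriers trapping the flow in $\{-\eps\le u\le\eps\}$ without ever invoking the singular cone $C$ directly.
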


If one wants a constant independent of the dimension of the dimension $n$, then
$\sqrt2$ is the best possible because the Simons' cone
\[
  C_{m,m} := \{ (x,y)\in \RR^m\times \RR^m =\RR^{2m}:  |x|=|y| \}
\]
is both topologically nontrivial and area-minimizing for $m\ge 4$,
and by a straightforward calculation (see section~\ref{Examples})
its density $\Theta(C_{m,m})$ tends to $\sqrt2$ as $m$ tends to infinity.

Many years ago Bruce Solomon conjectured that the answer to question Q2 above is $\sqrt2$.
Our work shows that Solomon's conjecture is true in the class of topologically nontrivial cones.

We remark that all of the many known examples
 (see~\cite{Lawlor-Criterion}) of area-minimizing hypercones $C$
with isolated singularities are topologically nontrivial.   On the other hand, there are 
examples 
(\cite{Hsiang-I}, \cite{Hsiang-II}, \cite{Hsiang-III})
of minimal embeddings of $m$-spheres into $(m+1)$-spheres that are not totally geodesic.
The corresponding cones are minimal hypercones with isolated singularities at their vertices
and are topological trivial.  However, we not know whether those cones minimize area.


Theorem~\ref{MainTheorem} can be restated in terms of minimal submanifolds of spheres:

\marginpar{Maybe this should be Theorem 1'}
\begin{theorem1'}
Let $\Gamma$ be a closed minimal hypersurface in the unit sphere $\partial \BB\subset \RR^n$.  Suppose that the corresponding
cone
\[
   C = \{ rx: x\in \Gamma, r\ge 0\}
\]
is area-minimizing.  Suppose also that at least one of the components of $(\partial \BB)\setminus \Gamma$ is not
contractible.  Then the area of $\Gamma$ is greater than $\sqrt2$ times the area of the totally geodesic $(n-2)$-sphere
in $\partial \BB$.
\end{theorem1'}

To see that the topological hypotheses of the two theorems are equivalent, 
note that if $U$ is a component of $\RR^n\setminus C$, then $U\cap \partial \BB$ is the corresponding component
of $(\partial \BB)\setminus \Gamma$.  Also, $U$ is homeomorphic to $(U\cap\partial \BB)\times \RR$, so
$U$ is contractible if and only if $U\cap \partial \BB$ is contractible.

Let $C$ be a cone as in Theorem~\ref{MainTheorem}.
Since one of the components of $\RR^n\setminus C$ is non-contractible, one of its homotopy groups,
say the $k^{\rm th}$ homotopy group, is nontrivial.    One can get a better lower bound for $\Theta(C)$
if one allows a constant that depends on $k$.  In particular,
we show in Theorem~\ref{TopTheorem} that
\[
  \Theta(C) \ge d_k = \left( \frac{k}{2\pi e} \right)^{k/2}\sigma_k
\]
where $d_k$ is the Gaussian density of a shrinking $k$-dimensional
sphere and $\sigma_k$ is the area of the unit $k$-dimensional sphere.  (Gaussian density
plays the role in mean curvature flow that density does is minimal surface theory.
See~\cite{White-Stratification}.)

As before, this result is sharp:  for any $\eps>0$, there is an $n$ and a cone 
$C\subset \RR^n$ such that $C$ satisfies the hypotheses of the theorem and such
that
\[
  \Theta(C) < d_k + \eps.
\]
See section~\ref{Examples}.

There are no singular area-minimizing hypercones of dimension less than $7$.
In another paper we will show that Theorem~\ref{MainTheorem} continues
to hold if the the hypothesis that the cone is area-minimizing is replaced by the hypothesis
that the cone has dimension less than $7$.  We will also prove a somewhat weakened version
of Theorem~\ref{TopTheorem} for cones of dimension less than $7$.

The proofs of the theorems use various facts about mean curvature flow.
We describe those facts as we need them.  For readers who would prefer to know
the relevant facts ahead of time, we have stated most of them in 
appendix~\ref{appendix}.  However, the proof of theorem~\ref{MainTheorem} 
requires much less than is stated
there.  In particular,
for theorem~\ref{MainTheorem}, 
it suffices to consider mean curvature flow up to the first singular time.

\section{The proof of Theorem~\ref{MainTheorem}}\label{Proof-One-Section}

\begin{proof}[Proof of theorem~\ref{MainTheorem}]
Let $M$ be any minimal variety in $\RR^n$.  Then $M$ is in equilibrium
for mean curvature flow. That is, 
\begin{equation}\label{static}
  \MM: t\in \RR \mapsto M
\end{equation}
is a mean curvature flow.   Note that $(x,t)$ is a singular point of $\MM$
if and only if $x$ is a singular point of $M$.  Furthermore, the Gaussian
density of $\MM$ at $(x,t)$ is equal to the density of $M$ at $x$:
\[
  \Theta(\MM, (x,t)) = \Theta(M,x).
\]
(The definition and basic properties of Gaussian density may be found
in~\cite{White-Stratification}, for example.)

Recall the following upper semicontinuity property of Gaussian density (which follows
fairly easily from Huisken's monotonicity of density):  if $\MM_i$ is a sequence
of mean curvature flows that converge (as Brakke flows) to $\MM$ and if $X_i$
is a sequence of points in spacetime that converge to $X$, then
\[
   \Theta(\MM, X) \ge \limsup \Theta(\MM_i, X_i).
\]

Now let $M$ be the portion of the cone $C$ in the unit ball $\BB$ centered at the origin: $M=C\cap \BB$.
As above, we let $\MM$ be the static mean curvature flow~\eqref{static}.
The singular points of $\MM$ are precisely the points $(0,t)$, $t\in \RR$.

We will construct for each sufficiently small $\eps>0$ a mean curvature flow
\[
   \MM_\eps: t\in \RR \mapsto M_\eps(t)
\]
with the following properties:
\begin{enumerate}[\upshape(p1)]
\item\label{first} $\MM_\eps\to \MM$ as $\eps \to 0$.
\item $\MM_\eps$ has a singular point $X_\eps=(x_\eps,0)$.
\item $\MM_\eps$ has as a tangent flow at $X_\eps$ a self-similarly
shrinking cylinder $\partial \Ss^k \times \RR^{n-k-1}$ for some $k=k_\eps \le n-2$.
\item $X_\eps\to (0,0)$ as $\eps\to 0$.
\end{enumerate}

We now explain why existence of such flows $\MM_\eps$ implies the theorem.
Let $d_k$ be the Gaussian density of a self-similarly shrinking cylinder
$\Ss^k\times \RR^{n-k-1}$.  (Here $\Ss^k=\partial \BB^{k+1}$ is the unit $k$-sphere in 
$\RR^{k+1}$.)  This Gaussian density is independent of $n$, and
hence is also the Gaussian density of a shrinking $k$-sphere in $\RR^{k+1}$.   Indeed,
\begin{equation}\label{GaussianDensityValues}
\begin{aligned}
    d_k 
    &= \left(\frac{k}{2\pi e}\right)^{k/2}\sigma_k  \\
    &= 2\sqrt{\pi} \left( \frac{k}{2e}\right)^{k/2} \left( \frac1{\Gamma(\frac{k+1}2)} \right)
\end{aligned}
\end{equation}
(where $\sigma_k$ is the area of a $k$-dimensional sphere of radius $1$),
from which it follows that 
\begin{equation}\label{dn-limit}
 \text{$d_1 > d_2 > d_3 >\dots$ and  $\lim_{k\to\infty}d_k = \sqrt2$}.
\end{equation}
(See~\cite{Stone-Density}*{Appendix A} for proofs of these facts about $d_k$.)
Thus
\[
  \Theta(\MM_\eps, X_\eps) = d_{k(\eps)} \ge d_{n-2},
\]
so
\[
  \Theta(C,0)
  =
  \Theta(\MM_0, (0,0))
  \ge
  \limsup_{\eps\to 0} \Theta(\MM_\eps, X_\eps) 
  \ge d_{n-2} > \sqrt2,
\]
as desired.  Hence to prove the theorem, it suffices to construct mean curvature flows
$\MM_\eps$ with properties (p1) - (p4).

Since $C$ is an area-minimizing hypercone, it is one leaf of a foliation of $\RR^N$
by area-minimizing hypersurfaces~\cite{Hardt-Simon-Cone}.  
The foliation is of course singular at the origin,
but it is smooth everywhere else.    
Each leaf other than $C$ is a radial graph over one of the components of 
  $(\partial \BB)\setminus C$.
Furthermore, the foliation is invariant under positive dilations about the origin. 
We can conveniently represent the leaves as level sets of a function $u: \RR^n\to \RR$
as follows.
Let $U^+$ and $U^-$ be the two connected components of $\RR^N\setminus C$.
By hypothesis, at least one of $U^+$ and $U^-$ is non-contractible.
We choose the labeling so that $U^-$ is non-contractible.
We let $u(x)=0$ for points $x\in C$.   For points $x\in \RR^n\setminus C$,
we let $u(x)$ be plus or minus the distance from the origin to the leaf through $x$ according to whether
$x\in U^+$ or $x\in U^-$.  Note that
\begin{equation}\label{homogeneous}
 \text{$u(\lambda x) = \lambda u(x)$ for $\lambda>0$}
\end{equation}
since the foliation is invariant under positive dilations.

Let 
\[
  L_v := u^{-1}(v) \cap \BB
\]
be the portion of the leaf $u^{-1}(v)$ inside the unit ball $\BB$.

Since the leaf $u^{-1}(v)$ converges smoothly (away from the origin) to $C$ as $v\to 0$, it follows that there is a $\delta>0$
such that the leaf $u^{-1}(v)$ intersects the unit sphere $\partial \BB$ transversely provided $|v|\le \delta$.   We now construct the mean curvature
flow $\MM_\eps$ for each $\eps$ with $0<\eps\le \delta$.

Our mean curvature flow $\MM_\eps$ will be a flow of a surface with boundary.  We first describe the motion of the boundary.
Let $\tau: \RR\to [-1,1)$ be a smooth increasing function such that 
\begin{enumerate}
\item $\tau(t)=-1$ for $t\le 0$,
\item $\tau'(t)>0$ for $t>0$, and
\item $\tau(t)\to 1$ as $t\to\infty$.
\end{enumerate}
For $t \in \RR$, let
\[
    \Gamma_\eps(t) =  \partial L_{\eps\tau(t)}.
\]
Thus for $t\le 0$, the boundary $\Gamma_\eps(t)$ is $\partial L_{-\eps}$.  As $t\to\infty$, 
$\Gamma_\eps(t)$
converges smoothly to $\partial L_\eps$.   Note that $t\mapsto \Gamma(t)$ is a smooth isotopy in $\partial \BB$.

Now let
\[
   \MM_\eps: t\in [0,\infty) \mapsto M_\eps(t)
\]
be the mean curvature flow (of surfaces with boundary) such that:
\begin{enumerate}
\item
Initially (i.e., at time $t=0$), the surface is $L_{-\eps}$.
\item At each time $t\ge 0$, the boundary of the surface is $\Gamma_\eps(t)$.
\end{enumerate}
(For existence of the flow, 
see theorem~\ref{BoundaryExistenceTheorem} in the appendix, which is proved by elliptic regularization as in~\cite{Ilmanen-Elliptic}. 
However, for the proof of theorem~\ref{MainTheorem}, 
it would suffice to consider the 
flow $t\in [0,T)\mapsto M_\eps(t)$ up to the first singular time.) 

Note that since the surface is initially minimal, and
since the boundary is always moving to one side of the surface, it follows (by the strong
maximum principle)
that for $t>0$, the surface $M_\eps(t)$ has
nowhere vanishing mean curvature.  
(See assertion~\ref{Each-M-assertion}
 in theorem~\ref{BoundaryExistenceTheorem} in the appendix.)

We extend the flow to all time by setting 
\[
   M_\eps(t) = M_\eps(0) = L_{-\eps} \quad \text{for $t\le 0$}.
\]

By the maximum principle (or by construction, if one uses elliptic regularization to construct
the flow), the surfaces $M_\eps(t)$ all lie in the region where $-\eps \le u(x)\le \eps$:
\begin{equation}\label{squeeze}
    M_\eps(t) \subset \BB \cap \{x: -\eps \le u(x) \le \eps\}.
\end{equation}
(To see this using the maximum principle, note that the quantity
\[
    \sup \{ |x|: x\in M_\eps(t)\}
\]
is a decreasing function of $t$, that
\[
   \inf \{ u(x): x\in M_\eps(t)\}
\]
is an increasing function of $t$, and that the quantity
\[
  \sup \{ u(x): x\in M_\eps(t)\}
\]
is a decreasing function of $t$ on any interval on which it is $>\eps$.)

As $\eps\to 0$, any subsequence of the flows $\MM_\eps$ has a further subsequence
that converges to a Brakke flow $\MM_0$.   By~\eqref{squeeze}, for each time $t$, $M_0(t)$
is a varifold supported in $C\cap \BB$.  In fact, for $t\le 0$, $M_0(t)$ is the multiplicity
$1$ varifold associated to $C\cap \BB$.   Consequently, either this holds for all $t$,
or else there is some time $T$ such that the $M_0(t)$ vanishes at time $T$. (That is,
such that $M_\eps(t)$ is the zero varifold for $t>T$.)  But such vanishing is impossible because
at all times, $M_0(t)$ supports an integral current whose boundary is an integral current
with support  $\Gamma_0$.

By the local regularity theory in~\cite{White-Local}, 
for all sufficiently small $\eps$, the flows $\MM_\eps$ are regular away
from the line $0\times \RR$.   That is, if $\eps(i)\to 0$ and if $(x_{\eps(i)}, t_{\eps(i)})$ is a singular point
of the flow $\MM_{\eps(i)}$, then $x_i\to 0$.  In particular, there are no singularities at the boundary.

\begin{claim}\label{large-t}  For sufficiently large $t$ (depending on $\eps$) 
the surfaces $M_\eps(t)$ are smooth and converge
smoothly to $L_\eps$.
\end{claim}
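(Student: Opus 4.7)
The plan is to analyze subsequential limits of the time-translated flows $s\mapsto M_\eps(s+t_i)$ as $t_i\to\infty$, to identify every such limit as the static flow on $L_\eps$, and then to upgrade that identification to smooth convergence via the local regularity theorem of \cite{White-Local}.

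First, given any sequence $t_i\to\infty$, Brakke compactness together with the mass bound implicit in \eqref{squeeze} yields, along a subsequence, a limit Brakke flow $\MM_\infty$ defined for all $s\in\RR$. Because $\Gamma_\eps(s+t_i)\to\partial L_\eps$ smoothly, uniformly in $s$ on compact intervals, the limit $\MM_\infty$ has the constant boundary $\partial L_\eps$, and its support lies in $\overline{\BB}\cap\{-\eps\le u\le\eps\}$.

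Next I would identify the limit. The functions $f_-(t)=\inf_{M_\eps(t)}u$ and $f_+(t)=\sup_{M_\eps(t)}u$ are, respectively, nondecreasing in $t$ and nonincreasing on $\{f_+>\eps\}$, as used in the proof of \eqref{squeeze}. Let $m_\pm$ be their limits; these lie in $[-\eps,\eps]$ and are constant along $\MM_\infty$. If $m_-<\eps$, then at each time $s$ the surface $M_\infty(s)$ touches the smooth stationary minimal leaf $L_{m_-}$ at some interior point, since the boundary of $M_\infty(s)$ sits on $\{u=\eps\}\ne\{u=m_-\}$. The strong maximum principle for Brakke flows touching a stationary minimal hypersurface from one side (Ilmanen) forces $M_\infty(s)$ to coincide with $L_{m_-}$ in a neighborhood of the touching point, and a standard clopen argument propagates this to all of $M_\infty(s)$, contradicting the boundary $\partial L_\eps\ne\partial L_{m_-}$. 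A parallel argument rules out $m_+>\eps$, so $M_\infty(s)$ is the unit-multiplicity static varifold on $L_\eps$.

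By the local regularity theorem of \cite{White-Local}, the convergence $M_\eps(s+t_i)\to L_\eps$ is smooth off the line $\{0\}\times\RR$; since $L_\eps$ is a smooth compact hypersurface-with-boundary lying at positive distance from the origin, this is smooth convergence everywhere on $L_\eps$. As every subsequence produces the same limit, $M_\eps(t)\to L_\eps$ smoothly as $t\to\infty$, and in particular $M_\eps(t)$ is smooth for all sufficiently large $t$. The main obstacle is the identification step: the crucial input is the strong maximum principle applied to a touching with a minimal leaf of the Hardt--Simon foliation, which uses both that the leaves are themselves static minimal surfaces and that the limit flow still carries a boundary distinct from $\partial L_{m_-}$.
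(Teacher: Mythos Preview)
Your argument is essentially correct and reaches the same conclusion, but it proceeds along a different route from the paper. The paper does not analyze time-translated Brakke limits at all: instead it invokes \cite{White-Size}*{Theorem~11.1} (which applies because the flow was shown to be mean-convex for $t>0$) to pass directly to a \emph{stationary} minimal hypersurface $H\subset\BB$ with boundary $\partial L_\eps$ and singular set of Hausdorff dimension at most $n-8$. The identification $H=L_\eps$ is then carried out by a purely elliptic maximum-principle argument: if $v:=\min_H u<\eps$, one compares $H$ not with $L_v$ itself but with a nearby leaf $L_s$ for $s<v$ close to $v$, so that the closest-point pair $(p,q)$ lies in the interiors; the tangent cones at $p$ and $q$ are then contained in halfspaces, hence are planes, hence $p$ and $q$ are regular points, and the classical strong maximum principle for smooth minimal hypersurfaces gives the contradiction. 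This perturbation to a nearby leaf is the paper's device for sidestepping the possibly nonempty singular set of $H$.

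Your approach trades the mean-convex structure theory of \cite{White-Size} for Brakke compactness plus a parabolic strong maximum principle against a static minimal leaf. That is a legitimate alternative, and it has the virtue of not relying on mean-convexity. On the other hand it leans on a Brakke-flow strong maximum principle and on the claim that the limit $m_-$ is actually \emph{attained} on the support of $M_\infty(s)$ (so that there is a genuine interior touching point with $L_{m_-}$); both points deserve a line of justification, and you should also note that if $m_-=0$ the leaf $L_{m_-}=C\cap\BB$ is singular at the origin, so the touching argument needs the same kind of care the paper's perturbation trick provides. Finally, your sentence ``a parallel argument rules out $m_+>\eps$'' is superfluous: \eqref{squeeze} already gives $m_+\le\eps$, so once $m_-=\eps$ the support is forced onto $L_\eps$.
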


It suffices to prove weak convergence to $L_\eps$, since smooth convergence then follows from the local regularity theory~\cite{White-Local}.
There are many ways to prove weak convergence. 
For example, by assertion~\ref{infinity-assertion} of 
theorem~\ref{BoundaryExistenceTheorem} in the appendix, the surface $M_\eps(t)$ converges
as $t\to\infty$ to a minimal hypersurface $H\subset \BB$ with boundary $\partial L_\eps$ and 
with a singular set $Z$ of 
Hausdorff dimension at most $n-8$.  The convergence is smooth away from $Z$.
If the claim were not true, then $v:=\min\{u(x): x\in H\}$ would be strictly less than $\eps$.
Then for $s<v$ and sufficiently close to $v$, the shortest distance from the leaf $L_s$ to $H$
is realized by points $p$ and $q$ where $p$ is in the interior of $L_s$ and $q$ is in $H\setminus \partial H$. 
 It follows that immediately that the tangent cones to $L_s$ at $p$ and to $H$ at $q$ lie in halfspaces and are
therefore planes.  Consequently, $p$ and $q$ are regular points of $L_s$ and of $H$, and therefore
we get a contradiction to the strong maximum principle.) 
This completes the proof of claim~\ref{large-t}.

\begin{claim}\label{singular}
If $\eps$ is sufficiently small, the flow $\MM_\eps$ must have a singularity.
\end{claim}

For suppose not.   
By hypothesis,  the component $U^-$  of $\RR^N\setminus C$ is not contractible. 
Thus at least one of its homotopy groups, say the $k^{\rm th}$, is 
nontrivial.
Hence there is a map $f: \Ss^k\to U^-$ from the $k$-sphere to $U^-$  that is homotopically nontrivial in $U^-$.
By dilating, we may assume that $f(\Ss^k)$ lies in $U^-\cap \BB$.
We assume that $\eps$ is small enough that  $u<-\eps$ on
$f(\Ss^k)$. (In other words, $f(\Ss^k)$ and the origin lie on opposite sides of 
$L_{-\eps}$.)

On the other hand, $f$ is homotopically trivial in $\BB\cap \overline{U^-}$
by the homotopy
\begin{align*}
  &H: \Ss^k\times[0,1]\to \BB\cap\{u\le a\}, \\
  &H(x, v) =  v f(x).
\end{align*}  

Now if the flow $\MM_\eps$ had no singularities, then it would provide a smooth isotopy
from $L_{-\eps}$ to $L_\eps$.   Using the isotopy, we could push 
\[
  H(\Ss^k\times[0,1])
\] 
into $U^-$, leaving $f(\Ss^k)=H(\Ss^k\times\{1\})$ fixed, which means that $f$ would be homotopically trivial in $U^-$,
contradicting the choice of $f$.  The contradiction proves that the flow $\MM_\eps$
has a singularity. 

\begin{claim}\label{neck-pinch}
If $\eps$ is sufficiently small, the flow $\MM_\eps$ must have a singularity 
with a self-similarly shrinking $\Ss^j\times\RR^{n-j}$ as a tangent flow.
\end{claim}

 Let $X_\eps=(x_\eps,t_\eps)$ be a singularity of the flow.  
 For this theorem, we may as well
choose $t_\eps$ to be the first time at which a singularity occurs.  
(In the proof of Theorem~\ref{TopTheorem} below, 
we will make a different choice.) 
Let $\mathcal{T}_\eps$ be a tangent flow to $\MM_\eps$ at $(x_\eps,t_\eps)$. 
Then $\mathcal{T}_\eps$
is a self-similarly shrinking cylinder 
      $\Ss^j \times \RR^{n-j-1}$ for some $j$ with $1\le j \le n-1$
      by theorem~\ref{ConvexTypeTheorem} in the 
      appendix\footnote{By claim~\ref{large-t}, the
      hypothesis of theorem~\ref{ConvexTypeTheorem} holds.}.
      (If $t_\eps$ is the first singular time, this also follows 
      from~\cite{White-Nature}*{Theorem~1}.)
      
This completes the proof of claim~\ref{neck-pinch}, but we remark  that in fact $j\ne n-1$, 
and thus that $j\le n-2$. 
To see this, note that for $t<t_\eps$, the surfaces $M_\eps(t)$
are diffeomorphic to $M_\eps(0)$ and hence are connected manifolds with nonempty boundary.   
Now if $j$ were equal to $n-1$,
that is, if a tangent flow $\mathcal{T}_\eps$ at $(x_\eps,t_\eps)$ were a shrinking sphere, then just before the singularity, $M_\eps(t)$ would have
a compact component diffeomorphic to a sphere,  a contradiction.
  Thus $j\le n-2$.

Now we have proved that $\MM_\eps$ has all the desired properties, except that we do not know that 
$(x_\eps, t_\eps)\to (0,0)$.   Thus we modify the flow by translating in time by $-t_\eps$.   The
new flow $\MM'_\eps$ has a shrinking-cylinder type singularity at $X_\eps' = (x_\eps, 0)$, and $X_\eps'\to (0,0)$.
Furthermore, as $\eps\to 0$, $\MM'_\eps$ converges to $\MM$ for the same reason that $\MM_\eps$ converges to $\MM$.
We have proved that the flow $\MM'_\eps$ has all the properties (p1) - (p4), completing the proof of the theorem.
\end{proof}

We have actually proved a little more than was asserted in the theorem:

\begin{theorem1*}\label{ALittleMore}
Let $C\in \RR^n$ be an area-minimizing hypercone with an isolated singularity at the origin, and suppose that $C$
is topologically nontrivial (as in Theorem~\ref{MainTheorem}).   Then the density of $C$ at the origin is greater than or equal to $d_{n-2}$, the Gaussian
density of a shrinking $(n-2)$-sphere in $\RR^{n-1}$.
\end{theorem1*}

For a better bound, see the corollary to theorem~\ref{TopTheorem} below.

\section{Bounding Density in Terms of Topology}

\begin{theorem}\label{TopTheorem}
Suppose that $C\subset \RR^n$ is an area-minimizing hypercone with an isolated singularity at the origin.
Suppose also that one of the components of $\RR^n\setminus C$ has nontrivial $k^{\rm th}$ homotopy group.
Then
\[
  \Theta(C) \ge d_k
\]
where 
\[
    d_k = \left(\frac{k}{2\pi e}\right)^{k/2}\sigma_k 
\]
is the Gaussian density of a shrinking $k$-sphere in $\RR^{k+1}$.
\end{theorem}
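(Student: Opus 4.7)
The plan is to refine the proof of Theorem~\ref{MainTheorem} by choosing the singular point $X_\eps$ of $\MM_\eps$ so that its cylindrical tangent $\Ss^{j} \times \RR^{n-j-1}$ has sphere-dimension $j \le k$. I would construct the flows $\MM_\eps$ and establish $\MM_\eps \to \MM$ exactly as in the proof of Theorem~\ref{MainTheorem}. The argument of Claim~\ref{singular}, which uses only nontriviality of $\pi_k(U^+)$, shows that $\MM_\eps$ has at least one singular point for each sufficiently small $\eps$; by~\cite{White-Nature}, the tangent flow there is a cylinder $\Ss^j \times \RR^{n-j-1}$. Among all cylindrical singular points of $\MM_\eps$ I would choose $X_\eps$ to minimize the (integer) value of $j(\eps)$, then translate in time so that $X_\eps \to (0,0)$ as $\eps \to 0$. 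Granted $j(\eps) \le k$, the desired bound follows from upper semicontinuity of Gaussian density together with the strict monotonicity of $d_\bullet$ in~\eqref{dn-limit}:
\[
  \Theta(C,0) \ge \limsup_{\eps \to 0} \Theta(\MM_\eps, X_\eps) = \limsup_{\eps \to 0} d_{j(\eps)} \ge d_k.
\]

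The core of the argument is the inequality $j(\eps) \le k$, which I would prove by contradiction. Suppose every cylindrical singularity of $\MM_\eps$ has sphere-dimension $j \ge k+1$. As in Claim~\ref{singular}, pick $f: \Ss^k \to U^+ \cap \BB$ that is nontrivial in $\pi_k(U^+)$ with $u \circ f > \eps$, and extend to $F: \BB^{k+1} \to \BB \cap \{u \ge -\eps\}$ using star-shapedness of $\BB \cap \{u \ge -\eps\}$. Let $R(t) \subset \BB$ denote the open region on the $+u$ side of $M_\eps(t)$, so that $R(0) \supset F(\BB^{k+1})$ and $R(t) \to \BB \cap \{u > \eps\} \subset U^+$ as $t \to \infty$ by Claim~\ref{large-t}, with $R(t_+) \subset R(t_-)$ across every singular time. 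Away from singular times, the mean-convex flow provides a smooth ambient isotopy that carries $F$ into $R(t)$. At a cylindrical singular point of sphere-dimension $j \ge k+1$, the lost region $R(t_-) \setminus R(t_+)$ is concentrated in an arbitrarily small tubular neighborhood of the $(n-j-1)$-dimensional shrinking axis of the cylinder; since $(k+1)+(n-j-1) < n$ whenever $j \ge k+1$, a generic small homotopy of $F$ (relative to $\Ss^k$, which stays safely inside $R(\infty)$ because $u \circ f > \eps$) lets $F$ avoid this neighborhood and push continuously into $R(t_+)$. Iterating across all singular times produces an extension $F': \BB^{k+1} \to R(\infty) \subset U^+$ of $f$, contradicting the nontriviality of $f$ in $\pi_k(U^+)$.

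The main obstacle is making this push-through rigorous at each cylindrical singularity: specifically, I need a local-in-spacetime description of the transition through a generic cylindrical singular point of a mean-convex Brakke flow, identifying $R(t_-) \setminus R(t_+)$ with a small tubular neighborhood of the $(n-j-1)$-dimensional cylinder axis, and enough structure on the spacetime singular set of $\MM_\eps$ (for instance, local finiteness of singular times, or at least a dense set of regular times across which the transversal push can be iterated) for the iteration to terminate with a well-defined $F'$. Both ingredients should be available from the mean-convex regularity theory already invoked in the proof of Theorem~\ref{MainTheorem} (\cite{White-Local}, \cite{White-Size}, \cite{White-Nature}); boundary singularities are ruled out there.
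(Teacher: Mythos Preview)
Your strategy is the same as the paper's: build the mean-convex flows $\MM_\eps$ from the proof of Theorem~\ref{MainTheorem}, locate a singular point whose cylindrical tangent $\Ss^j\times\RR^{n-j-1}$ has $j\le k$, and conclude $\Theta(C)\ge d_j\ge d_k$ by upper semicontinuity. The only difference is in how the key inequality $j\le k$ is obtained. The paper does not argue it directly; it sets up the swept region $W(t)$ behind $M_\eps(t)$, observes that $f:\Ss^k\to U^-$ is homotopically nontrivial in $W(0)$ but trivial in the star-shaped $W(\infty)$, and then invokes \cite{White-Changes} as a black box to conclude that some singularity of the flow must have $j\le k$.

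Your push-through argument is exactly the heuristic behind \cite{White-Changes}: if every cylinder had $j\ge k+1$, its axis would have dimension $\le n-k-2$, and a generic $(k+1)$-disk would miss it, so $F$ could be isotoped across each singular time into $R(\infty)\subset U^+$, contradicting nontriviality of $f$. The obstacle you flag---enough structure on the spacetime singular set to make the iteration across singular times rigorous, and a precise local description of $R(t_-)\setminus R(t_+)$ as a tube about the axis---is real, and is precisely the content of \cite{White-Changes}; it is not a consequence of \cite{White-Local}, \cite{White-Size}, or \cite{White-Nature} alone. In particular, singular times need not be isolated, so one cannot simply ``iterate'' over them; the argument in \cite{White-Changes} instead uses the parabolic Hausdorff dimension bound on the singular set together with a spacetime transversality argument. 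So your proposal is correct but, like the paper, ultimately rests on \cite{White-Changes}; you should cite it rather than attempt to reprove it in place.
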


Here $\sigma_k$ is the area of the unit sphere in $\RR^{k+1}$.

\begin{proof}
For $0<\eps\le \delta$, let $\MM_\eps$ be the flow constructed in the proof of 
theorem~\ref{MainTheorem}.
We may suppose that the $k^{\rm th}$ homotopy group of $U^-$ is non-trivial.   
Thus there a continuous
map $f:\Ss^k\to U^-$ that is homotopically nontrivial in $U^-$.

By dilating, we may assume that $f(\Ss^k)$ lies in $\BB\cap U^-$. 
We will assume from now on that $\eps$ is sufficiently small that $u<-\eps$ on $f(\Ss^k)$.

Let $W(0)=\{x\in \BB: u(x)<\eps\}$.
For $0<t\le\infty$, let
\[
   W(t) = W(0)\cup \left(\bigcup_{T<t} M_\eps(t) \right).
\]
If we think of the $M_\eps(t)$ as moving forward, then $W(t)$ is the portion of $\BB$ that 
lies behind $M_\eps(t)$.  
Note that 
\[
  W(\infty) = \{x\in \BB: u(x)< \eps\}.
\]
Note also that $W(\infty)$ is star-shaped (by~\eqref{homogeneous}), so $f$ is homotopically trivial
in $W(\infty)$. 

Thus $f$ is homotopically trivial in $W(\infty)$ but not in $W(0)$ (since $W(0)\subset U^-$).
By~theorem~\ref{TopologyTheorem} (together with theorem~\ref{ConvexTypeTheorem})
 in the appendix, 
this implies that there is a point $X_\eps=(x_\eps,t_\eps)$ at which the tangent
flow to $\MM_\eps$ is a shrinking $\partial \BB^j\times \RR^{n-k-1}$ for some $j\le k$.
Thus
\[
    \Theta(\MM_\eps, X_\eps) = d_j \ge d_k.
\]
Now exactly as in the proof of theorem~\ref{MainTheorem}, this implies that
\[
    \Theta(C,\OO) \ge \limsup_{\eps\to 0} \Theta(\MM_\eps, X_\eps) \ge d_k.
\]
\end{proof}

\begin{corollary}
Let $C$ be an area-minimizing hypercone in $\RR^n$ with an isolated singularity at the origin.
Suppose that at least one of the components of $\RR^n\setminus C$ is not contractible.
Then
\[
      \Theta(C) \ge d_{[(n-2)/2]}
\]
where $[(n-2)/2]$ is the greatest integer less than or equal to $(n-2)/2$.
\end{corollary}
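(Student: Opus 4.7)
The plan is to reduce the corollary to Theorem~\ref{TopTheorem}. Write $m:=[(n-2)/2]$; since the sequence $d_k$ is decreasing by~\eqref{dn-limit}, it suffices to show that if at least one component of $\RR^n\setminus C$ is not contractible, then for some $k\le m$ and some component $U$ we have $\pi_k(U)\ne 0$. Theorem~\ref{TopTheorem} will then give $\Theta(C)\ge d_k\ge d_m$. Let $U^\pm$ denote the two components of $\RR^n\setminus C$ and $V^\pm:=U^\pm\cap\partial\BB$ the corresponding components of $\partial\BB\setminus\Gamma$, where $\Gamma:=C\cap\partial\BB$. The equivalences $U^\pm\simeq V^\pm\simeq\overline{V^\pm}$ let me work with the $V^\pm$; note that $\overline{V^\pm}$ is a compact $(n-1)$-manifold with boundary $\Gamma$. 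I will argue the contrapositive: if both $V^+$ and $V^-$ are $m$-connected, then both are contractible.

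First observe that $\Gamma$ is a closed, connected, orientable $(n-2)$-manifold (orientability because $\Gamma$ separates the orientable $\partial\BB$, and connectedness from Alexander duality: $H^{n-2}(\Gamma;\ZZ)\cong\tilde H_0(\partial\BB\setminus\Gamma;\ZZ)=\ZZ$). By Hurewicz, $m$-connectedness gives $H_k(V^\pm;\ZZ)=0$ for $1\le k\le m$. Mayer--Vietoris applied to $\partial\BB=\overline{V^+}\cup\overline{V^-}$ (with intersection $\Gamma$), combined with the vanishing of $H_k(\partial\BB)$ for $1\le k\le n-2$, yields
\[
H_j(\Gamma;\ZZ)\cong H_j(V^+;\ZZ)\oplus H_j(V^-;\ZZ) \quad\text{for } 1\le j\le n-3,
\]
and so $H_j(\Gamma;\ZZ)=0$ for $1\le j\le m$. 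Poincar\'e duality on $\Gamma$, together with the universal coefficient theorem, then forces $H_j(\Gamma;\ZZ)=0$ in the complementary range $n-2-m\le j\le n-3$; the UCT is harmless here because each relevant Ext term involves a group that is either zero or free ($H_0(\Gamma)=\ZZ$). A parity case check shows $[1,m]\cup[n-2-m,n-3]=[1,n-3]$ for $m=[(n-2)/2]$, so $H_j(\Gamma;\ZZ)=0$ throughout $1\le j\le n-3$. Feeding this back through Mayer--Vietoris gives $H_j(V^\pm;\ZZ)=0$ for $1\le j\le n-3$. The top degrees are handled directly: $H_{n-1}(V^\pm)=0$ because $\overline{V^\pm}$ has nonempty boundary, and $H_{n-2}(V^\pm)=0$ from the remaining piece of the Mayer--Vietoris sequence, in which the connecting map $H_{n-1}(\partial\BB)\to H_{n-2}(\Gamma)$ is the isomorphism $\ZZ\to\ZZ$ sending $[\partial\BB]$ to $\pm[\Gamma]$. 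Thus $V^\pm$ is simply connected with trivial integer homology in all positive degrees; as $V^\pm$ has the homotopy type of a finite CW complex, Whitehead's theorem gives that $V^\pm$ is contractible, contradicting the standing hypothesis and completing the reduction.

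The main obstacle is carrying out the Poincar\'e-duality step over $\ZZ$ rather than over the rationals: rationally the symmetric vanishing is immediate, but over $\ZZ$ one must ensure that no Ext-torsion obstructs the desired vanishing in the complementary range. The inductive pattern of the Mayer--Vietoris vanishings supplies exactly what is needed so that each invocation of UCT encounters a zero or free adjacent group. Everything else (Alexander duality, Hurewicz, Mayer--Vietoris, Whitehead) is standard and fits together cleanly, yielding a $k\le m$ with $\pi_k(V^\pm)\ne 0$ and hence, via Theorem~\ref{TopTheorem}, the bound $\Theta(C)\ge d_{[(n-2)/2]}$.
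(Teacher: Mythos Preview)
Your proof is correct, but it follows a genuinely different route from the paper's. The paper argues directly: let $k$ be the least integer with $\pi_k(U^-)\ne0$; if $k\ge2$ then Hurewicz gives $H_k(V^-)\ne0$, Alexander duality in $S^{n-1}$ gives $H^{n-k-2}(V^+)\ne0$, and universal coefficients plus Hurewicz produce a $j\le n-k-2$ with $\pi_j(U^+)\ne0$. Since $j+k\le n-2$, the smaller of $j,k$ is at most $[(n-2)/2]$, and Theorem~\ref{TopTheorem} finishes.

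You instead prove the contrapositive: assuming both $V^\pm$ are $m$-connected, you show both are contractible. Your mechanism is Mayer--Vietoris on $\partial\BB=\overline{V^+}\cup\overline{V^-}$ together with Poincar\'e duality on the link $\Gamma$ (rather than Alexander duality in the sphere), first forcing $H_j(\Gamma)=0$ for $1\le j\le n-3$ and then deducing acyclicity of the $V^\pm$, with a separate check of the connecting map $H_{n-1}(\partial\BB)\to H_{n-2}(\Gamma)$ to handle the top degree. Your UCT bookkeeping over $\ZZ$ is handled correctly: in each use of $H_j(\Gamma)\cong H^{n-2-j}(\Gamma)$ the adjacent group appearing in the $\operatorname{Ext}$ term is either $0$ or $H_0(\Gamma)=\ZZ$, so no torsion obstruction arises.

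What each approach buys: the paper's Alexander-duality argument is shorter and never needs to analyze $\Gamma$ or the top of the Mayer--Vietoris sequence; it also yields the slightly finer statement that the two ``critical'' homotopy dimensions $j$ and $k$, one for each side, satisfy $j+k\le n-2$. Your approach, on the other hand, gives the stronger structural conclusion that if neither side has low-dimensional homotopy then \emph{both} sides (and the link $\Gamma$) are homology spheres in the relevant range, and in fact both $V^\pm$ are contractible---information the paper's argument does not extract.
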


This improves on the bound $\Theta(C)\ge d_{n-2}$ given by theorem~1$^*$.

\begin{proof}
Let $U^-$ and $U^+$ be the two components of $\RR^n\setminus C$, and let
$V^-$ and $V^+$ be the corresponding components of $(\partial \BB)\setminus C$.
Note that $U^-$ and $U^+$ are homemorphic to $V^-\times \RR$ and $V^+\times\RR$
and thus are homotopy equivalent to $V^-$ and $V^+$.   

We may suppose that $U^-$ is not contractible, so that one of its homotopy groups
is not trivial.  Let $k$ be the smallest integer $\ge 1$ such that $\pi_k(U^-)$ is nontrivial.

If $k=1$, then $\Theta(C,\OO) \ge d_1 \ge d_{[(n-1)/2]}$ and we are done.
Thus we may assume that $k>1$.
In particular, $U^-$ is simply connected, so by the 
Hurewicz Theorem, the homology group $H_k(U^-)$ is nontrivial.
Thus
\begin{align*}
 0
 &\ne H_k(U^-) \\
 &= H_k(V^-)   \\
 &= H_k(\overline{V^-}) \\
 &= H^{n-k-2}(V^+) \qquad (\text{by Alexander duality}) \\
 &= H^{n-k-2}(U^+) 
\end{align*}
By the universal coefficients theorem, the nontriviality of $H^{n-k-2}(U^+)$
implies that $H_{n-k-3}(U^+)$ and $H_{n-k-2}(U^+)$ cannot both be trivial.
The Hurewicz theorem then implies that  $\pi_j(U^+)$ is nontrivial for some $j\le n-k-2$.

Thus $j+k\le n-2$, so if $p$ is the smaller of $j$ and $k$, then $p\le [(n-2)/2]$.
Hence by theorem~\ref{TopTheorem},
\[
   \Theta(C,\OO) \ge d_p \ge d_{[(n-2)/2]}.
\]
\end{proof}

\section{Examples}\label{Examples}

For positive integers $m$ and $n$, let $C_{m,n}$ be the Simons' cone
\[
    C_{m,n} = \{ (x,y)\in \RR^{m+1}\times\RR^{n+1} = \RR^{m+n+2}:  n|x|^2 = m|y|^2\}.
\]
This cone is minimal, and it is area-minimizing if and only either (i) $m+n\ge 6$ or 
(ii) $m+n=6$ and neither $m$ nor $n$ is equal to $1$ (see~\cite{Lawlor-Criterion}).

The cone divides the unit sphere into two components, one of which is the product of an 
 $m$-sphere and an $n$-ball.
Therefore the corresponding component of the complement of $C$ has nontrivial $m^{\rm th}$ homotopy group, so
according to theorem~\ref{TopTheorem}, 
\[
  \Theta(C_{m,n}) \ge d_m.
\]
Thus the following theorem shows that the constant $d_m$ in theorem~\ref{TopTheorem} cannot be replaced by any larger constant.

\begin{theorem}\label{LimitDensity}
$
  \lim_{n\to\infty} \Theta(C_{m,n}) = d_m.
$
\end{theorem}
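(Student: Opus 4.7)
My plan is to reduce Theorem~\ref{LimitDensity} to a direct calculation of $\Theta(C_{m,n})$ as a closed-form expression involving Gamma functions, and then apply Stirling's formula to pass to the limit.

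First, I would identify the link $\Gamma_{m,n} := C_{m,n}\cap \partial \BB$ of the Simons' cone as a Clifford-type product of round spheres.  On $\partial \BB$ the defining equation $n|x|^2 = m|y|^2$ together with $|x|^2+|y|^2 = 1$ forces $|x|^2 = m/(m+n)$ and $|y|^2 = n/(m+n)$; thus $\Gamma_{m,n}$ is the isometric product
\[
  \Gamma_{m,n} \;=\; \Ss^m\!\left(\sqrt{\tfrac{m}{m+n}}\right) \times \Ss^n\!\left(\sqrt{\tfrac{n}{m+n}}\right).
\]
Its area is therefore $\sigma_m\sigma_n \bigl(\tfrac{m}{m+n}\bigr)^{m/2}\bigl(\tfrac{n}{m+n}\bigr)^{n/2}$.

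Next, since $C_{m,n}$ is an $(m+n+1)$-dimensional cone over $\Gamma_{m,n}$ with vertex at the origin, the standard identity $\Theta(C)=\area(\Gamma)/\sigma_{\dim C-1}$ gives
\[
  \Theta(C_{m,n}) \;=\; \frac{\sigma_m\,\sigma_n}{\sigma_{m+n}}\left(\frac{m}{m+n}\right)^{m/2}\left(\frac{n}{m+n}\right)^{n/2}.
\]
Using $\sigma_k = 2\pi^{(k+1)/2}/\Gamma(\tfrac{k+1}{2})$, the quotient $\sigma_n/\sigma_{m+n}$ simplifies to $\pi^{-m/2}\,\Gamma(\tfrac{m+n+1}{2})/\Gamma(\tfrac{n+1}{2})$, so the whole expression is elementary in $m$, $n$, and ratios of Gamma functions.

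Finally, I would let $n\to\infty$ with $m$ fixed.  Three standard asymptotics combine: $(\tfrac{n}{m+n})^{n/2}\to e^{-m/2}$; $(\tfrac{m}{m+n})^{m/2}\sim (m/n)^{m/2}$; and, by Stirling, $\Gamma(\tfrac{m+n+1}{2})/\Gamma(\tfrac{n+1}{2})\sim (n/2)^{m/2}$.  Multiplying these together, the factors of $n$ cancel and one obtains
\[
  \lim_{n\to\infty}\Theta(C_{m,n}) \;=\; \sigma_m\!\left(\frac{m}{2\pi e}\right)^{m/2} \;=\; d_m,
\]
as required.  The only step with any real content is the Stirling asymptotic for the ratio of Gamma functions, and even that is standard; the proof is otherwise a bookkeeping exercise, so I do not expect any genuine obstacle.
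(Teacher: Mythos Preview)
Your proposal is correct and follows essentially the same route as the paper: identify the link as a product of round spheres, write $\Theta(C_{m,n})$ as $\dfrac{\sigma_m\sigma_n}{\sigma_{m+n}}\bigl(\tfrac{m}{m+n}\bigr)^{m/2}\bigl(\tfrac{n}{m+n}\bigr)^{n/2}$, and then combine the elementary limits $(\tfrac{n}{m+n})^{n/2}\to e^{-m/2}$, $(\tfrac{m}{m+n})^{m/2}\sim(m/n)^{m/2}$ with the Stirling asymptotic $\Gamma(z+a)/\Gamma(z)\sim z^a$ for the ratio $\sigma_n/\sigma_{m+n}$. The only cosmetic difference is that the paper expresses $\sigma_k$ via $\omega_{k+1}$ and $\Gamma(\tfrac{k+3}{2})$ rather than directly via $\Gamma(\tfrac{k+1}{2})$, which shifts the Gamma arguments by~$1$ but of course yields the same limit.
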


\begin{proof} 
The intersection $\Gamma_{m,n}$ of $C_{m,n}$ with the unit sphere is the Cartesian product of the $m$-sphere of radius $\sqrt{\frac{m}{m+n}}$
and the $n$-sphere of radius $\sqrt{\frac{n}{m+n}}$.  

Thus
\begin{align*}
  \area(\Gamma_{m,n}) 
  &= \sigma_m\left(\sqrt{\frac{m}{m+n}}\right)^m \cdot \sigma_n \left( \sqrt{\frac{n}{m+n}} \right)^n
  \\
  &= \sigma_m\sigma_n  \left(\frac{m}{m+n}\right)^{m/2}  \left( \frac{n}{m+n} \right)^{n/2}
 \end{align*}
 where $\sigma_k$ is the area of a $k$-dimensional sphere of radius $k$.  
To get the density of the cone, we divide the area of $\Gamma_{m,n}$ by the area $\sigma_{m+n}$ of a unit
sphere of the same dimension:
\begin{equation}\label{ExactDensity}
\Theta(C_{m,n})
= \
\frac{\sigma_m\sigma_n}{\sigma_{m+n}} \left(\frac{m}{m+n}\right)^{m/2}  \left( \frac{n}{m+n} \right)^{n/2}.
\end{equation}
As usual, we write $A \sim B $ to mean $\lim_{n\to\infty}(A/B)=1$.  Note that
\[
   \left(\frac{m}{m+n}\right)^{m/2} 
   =  \left(\frac{m}n\right)^{m/2} \left(\frac{n}{m+n}\right)^{m/2} \sim \left(\frac{m}n\right)^{m/2}
\]
and
\[
  \left( \frac{n}{m+n} \right)^{n/2} = \left(1+\frac{m}n\right)^{-n/2} \to e^{-m/2},
\]
so
\begin{equation}\label{ok}
\Theta(C_{m,n}) \sim \sigma_m \frac{\sigma_n}{\sigma_{m+n}} \left( \frac{m}{ne}\right)^{m/2}.
\end{equation}

Now
\begin{equation}\label{SphereArea}
  \sigma_k  = (k+1)\omega_{k+1} = (k+1) \frac{\pi\,^{(k+1)/2}}{\Gamma(\frac{k+3}2)},
\end{equation}
where $\omega_{k+1}$ is the volume of the unit ball in $\RR^{k+1}$.

Thus
\begin{equation}\label{SphereRatio}
   \frac{\sigma_n}{\sigma_{m+n}} 
   = \frac{n+1}{m+n+1} \cdot \frac{\omega_{n+1}}{\omega_{m+n+1}}    
   \sim \frac{\omega_{m+1}}{\omega_{m+n+1}} 
   = \frac1{\pi^{m/2}} \cdot \frac{\Gamma(\frac{m+n+3}2)}{ \Gamma(\frac{n+3}2)}.
\end{equation}
From Stirling's approximation 
\[
  \Gamma(z)\sim \sqrt{\frac{2\pi}z}\left( \frac{z}e\right)^z,
\]
one checks that 
\[
   \frac{\Gamma(z+a)}{\Gamma(z)} \sim z^a
\]
as $z\to \infty$ with $a$ fixed.  Thus~\eqref{SphereRatio} becomes
\[
 \frac{\sigma_n}{\sigma_{m+n}} 
 \sim \frac1{\pi^{m/2}} \cdot \left( \frac{n+3}2 \right)^{m/2} 
 \sim  \left( \frac{n}{2\pi} \right)^{m/2}. 
\]
Combining this with~\eqref{ok} gives
\begin{equation}\label{SimonsConeAsymptotics}
\Theta(C_{m,n})
\sim
\sigma_m \left( \frac{m}{2 \pi e} \right)^{m/2},
\end{equation}
which is precisely $d_m$ (see~\eqref{GaussianDensityValues}).
\end{proof}

In a similar manner, one can use~\eqref{ExactDensity} and Stirling's approximation
to check that $\lim_{n\to\infty}\Theta(C_{n,n})=\sqrt2$, which
shows that $\sqrt2$ is the best possible constant in theorem~\ref{MainTheorem}.
Alternatively, one can see that $\sqrt2$ is optimal because
\begin{equation*}
\lim_{n\to\infty} \lim_{m\to \infty} \Theta(C_{m,n})
= 
\lim_{n\to\infty} d_n = \sqrt2.
\end{equation*}
by~\eqref{SimonsConeAsymptotics} and~\eqref{dn-limit}.   

Although the bounds $\sqrt2$ and $d_n$ in theorems~\ref{MainTheorem} 
and~\ref{TopTheorem}
are the best constants independent of dimension, they are not optimal if one  considers cones of a given dimension $N$.
However, the bound from theorem~\ref{TopTheorem} is surprisingly good even when $N$ is small.
(Small here means close to $7$, the smallest dimension $N$ for which there exist $N$-dimensional singular area minimizing hypersones.)  
For example, consider the $8$-dimensional cone $C_{1,6}$, the simplest known area-minimizing hypercone
whose complement contains a component that is not simply connected.
 According to theorem~\ref{TopTheorem}, its density is greater than $d_1=1.520$.
In fact, its density (which can be calculated from~\eqref{ExactDensity} and~\eqref{SphereArea})
 is $1.523$.  Thus the density is only $0.2\%$ higher than the lower bound from theorem~\ref{TopTheorem}.  By contrast, its density is about $8\%$ higher than 
 $\sqrt2=1.414$, the lower bound from 
 theorem~\ref{MainTheorem}.

\section{Open Problems}

\begin{enumerate}
\item Prove that in Theorem~\ref{MainTheorem}, we can drop one or more of the
following hypotheses: that $C$ be minimizing, that $C$ be topologically nontrivial, and that
$C$ have an isolated singularity.
\item Prove that in Theorem~\ref{TopTheorem}, we can
drop one or both of the hypotheses that $C$ be area-minimizing and that $C$ have
an isolated singularity.
\item (conjectured by Solomon.) For $m\ge 1$, prove that the Simon's cone $C_{m,m}$ is the $(2m+1)$-dimensional minimal 
 (or area-minimizing) 
 hypercone of least possible density.  For $m=1$, one has to exclude cones with soap-film-like triple
 junctions, since the density of $3$ half planes meeting along a common edge is $3/2$,
 which is less than $\Theta(C_{1,1})$.  However, in higher dimensions this exclusion
 is not necessary since $\Theta(C_{m,m})<3/2$ for $m>1$.
 \item  (Conjectured by Solomon.)  
 Prove that the cone $C_{m,m+1}$ is the $(2m+2)$-dimensional minimal (or area-minimizing)
 hypercone of least possible density.
 \item Prove that $C_{m,n}$ has  the least possible density among all $(m+n+1)$-dimensional 
 minimal (or area-minimizing) hypercones $C$ such that at least one of the components of the complement
 has nontrivial $m^{\rm th}$ homotopy group.
 \item Prove lower density bounds for minimal or for area-minimizing cones of codimension $>1$.
 \item Find a conceptual explanation for why the bounds in this paper are sharp.  The authors find
 it mysterious that the method of proof here gives such good bounds.
 \end{enumerate}
 
\appendix

\section{mean curvature flow}\label{appendix}

In this section, we state the fundamental results about  existence and 
singularity structure
  for  a mean-convex hypersurface $M_t$ moving by mean
 curvature in an ambient space $N$, where the prescribed Dirichlet data
 consist of the initial surface $\Sigma=M_0$ and the motion 
 $t\mapsto \Gamma_t = \partial M_t$ of the boundary.
Throughout this section, we make the following assumptions:

\begin{enumerate}[\upshape(1)]
\item $N$ is a smooth Riemannian $n$-manifold with smooth boundary.
   At each boundary point, the mean curvature is a nonnegative multiple of the inward
   unit normal.  (In this paper, $N$ is the unit ball in $\RR^n$.)
\item $t\in[0,\infty)\mapsto \Gamma_t$ is a smooth, $1$-parameter family of smoothly embedded,
closed $(n-2)$-manifolds in $\partial N$.   As $t\to\infty$, the $\Gamma_t$ converge
smoothly to a smoothly embedded $(n-2)$ manifold $\Gamma_\infty$.
\item\label{monotonic} 
        The family $t\mapsto \Gamma_t$ is monotonic is the following sense:
        $\Gamma_t$ is boundary of a region $U_t\subset \partial N$ where
         $U(t)\supset U(t')$ for $t\le t'$.      
\item The initial surface
   $\Sigma$ is a smoothly embedded, compact $(n-1)$-manifold in $N$ such that
\[
  \partial \Sigma = \Gamma_0
\]
and such that $\Sigma$ and $\partial N$ are nowhere tangent\footnote{Actually, 
it suffices to assume that $\Sigma$ and $\overline{U_0}$ are nowhere tangent.
Indeed, the hypothesis is stated that way in  
  \cite{White-Subsequent}. This allows, for example, the initial surface
  $\Sigma$ to be $(\partial N)\setminus U_0$.}.
\item The surfaces $\Sigma$ and $U_0$ together bound a region $\Omega$ in $N$,
 and at each point of $\Sigma$, the mean curvature vector
 is a nonnegative multiple of
 the unit normal pointing into $\Omega$.
\item\label{moving}
         If $\Sigma$ is a minimal surface, then $\Gamma_t\ne \Gamma_0$
         for $t>0$. (This is to guarantee that surface $M_t$ starts moving as soon as $t$ is positive.)
\end{enumerate}

\begin{theorem}\label{BoundaryExistenceTheorem}
Under the hypotheses above,
 there is a unique weak solution $t\in[0,\infty)\mapsto M_t$ of mean curvature flow
such that $M_0=\Sigma$ and  such that $\partial M_t=\Gamma_t$ for all $t$. 
The surfaces $M_t\setminus \Gamma_t$ are disjoint (for distinct values of $t$), and the 
time-of-arrival function
\begin{align*}
&\tau: \overline{\Omega}\setminus \partial N \to [0,\infty] \\
&\tau(x) = \begin{cases}
t &\text{if $x\in M_t$}\\
\infty &\text{if $x\notin \cup_{0\le t < \infty}M_t$}
\end{cases}
\end{align*}
is a continuous function.  Each $M_t$ is rectifiable, and the multiplicity-one varifolds
associated to the $M_t$ form a Brakke flow.

Furthermore, there is a compact subset (the singular set) $Q$ of $\Omega$ with the following properties:
\begin{enumerate}[\upshape(1)]
\item The set $Q$ has Hausdorff dimension at most $(n-2)$, and the spacetime singular set
has parabolic Hausdorff dimension at most $(n-2)$.
\item\label{Each-M-assertion}
 Each $M_t\setminus Q$ with $t\in(0,\infty)$ is a smooth, properly embedded submanifold of 
    $N\setminus Q$ with
boundary $\Gamma_t$, and the mean curvature of $M_t\setminus(Q\cup \Gamma_t)$ is 
 everywhere positive.
\item If  $t>0$ and $t(i)\to t$, then $M_{t(i)}$ converges smoothly to $M_t$ away from $M_t\cap Q$.
\item If $t(i)\to 0$, then $M_{t(i)}$ converges in $C^{1,\alpha}$ (for every $\alpha\in (0,1)$)
 to $M_0$, and the convergence is smooth except at the boundary.
 \item\label{infinity-assertion}
  The surface $M_t$ converges as $t\to\infty$ to a minimal variety $M_\infty$, 
   and the convergence is smooth away from the singular set $Q\cap M_\infty$, which
   has Hausdorff dimension at most $n-8$.
\end{enumerate}
\end{theorem}

See~\cite{White-Subsequent}*{theorem~4} for the proof.

Of course ``there is a unique weak solution'' is somewhat informal.
However, uniqueness is not needed in this paper.
(The precise uniqueness statement is: the flow $t\in[0,\infty)\mapsto M_t$ is the level set
flow generated by $\Sigma$.  Note that this flow is non-fattening 
since by assertion~\ref{Each-M-assertion}
of the theorem, $M_t$ contains no non-empty open subset of $N$.
  See~\cite{White-Topology} for level set
flow of surfaces with boundary.)

If $M$ is an $(n-1)$-dimensional hypersurface and $x\in M$ is a point where the mean curvature is nonzero,
let 
\[
    \Phi(M,x) = \frac{\kappa_1}h
\]
where 
$
     \kappa_1\le \kappa_2 \le \dots \le \kappa_{n-1}
$
 are the principal
curvatures of $M$ at $x$ and 
\[
     h=\sum_i\kappa_i>0
\]
 is the mean curvature of $M$ at $x$.
(One can think of $\Phi(M,x)$ as a dimensionless measure of convexity.
In particular, $\Phi(M,x)\ge 0$ if and only if $M$ is convex to second order at $x$.) 
We say a singularity $(x,t)$ with $x\in M_t$ has {\em convex type}
provided:
\begin{enumerate}
\item\label{cylinder} Each tangent flow at $(x,t)$ is a 
self-similarly shrinking $\Ss^k\times \RR^{n-k}$
  for some $k\ge 1$.
\item\label{umbilicity} If $x_i\in M_{t_i}$ is a sequence of regular points converging to $(x,t)$, then
\[
  \liminf_{i\to\infty}\Phi(M_{t_i},x_i) \ge 0.
\]
\end{enumerate}

It seems likely that all of the finite-time singularities of the flow in 
theorem~\ref{BoundaryExistenceTheorem}
 must have convex
type.  In some situations, it is known that the singularities have convex type:

\begin{theorem}\label{ConvexTypeTheorem}
Let $t\mapsto M_t$ be as in 
theorem~\ref{BoundaryExistenceTheorem}.
Suppose that either
\begin{enumerate}
\item  $n<8$,  or 
\item $N\subset \RR^n$ (with the Euclidean metric) and $M_\infty$ is smooth.
\end{enumerate}
Then all of the finite-time singularities of the flow have convex type.
\end{theorem}

See~\cite{White-Subsequent}*{theorems 5 and 6} for the proof.
(It may seem peculiar to 
require any hypothesis on $M_\infty$ for a parabolic problem.
The hypothesis arises because the proof is by elliptic regularization.)

Now extend the time-of-arrival function $\tau$ in 
theorem~\ref{BoundaryExistenceTheorem} to all of $N\setminus \partial N$
by setting $\tau(x)=-\infty$ for $x \notin\overline{\Omega}$.
Of course $\tau$ will now be discontinuous along $M_0$.
Let $W(t)=\{x: \tau(x)<t\}$.  If one thinks of $M_t$ as moving forward, then for $t\ge 0$, 
the region $W(t)$ is the region in the interior of $N$ that lies behind the surface $M_t$.
Note the the topology of $W(t)$ can change only when there are singularities in the flow.
Not surprisingly, the way that the topology changes gives information about those singularities:

\begin{theorem}\label{TopologyTheorem}
Let $t\mapsto M_t$ be as in theorem~\ref{BoundaryExistenceTheorem}, 
and suppose that all the finite-time singularities have convex type.
Let $0\le t<t'\le\infty$ and suppose that $f:\Ss^k\to W(t)$ is homotopically trivial  
in $W(t')$ but not in $W(t)$.  Then there is a singularity $(x_0,t_0)$
with $t\le t_0<t'$ that has as a tangent flow a shrinking $\Ss^j\times\RR^{n-j}$
for some $j\le k$.
\end{theorem}

See~\cite{White-Changes}*{theorem~5.3} for the proof.
(The theorem there is stated under the 
additional hypotheses in theorem~\ref{ConvexTypeTheorem}
above, but as explained in the paragraph following 
  \cite{White-Changes}*{theorem~5.3}, those hypotheses are used only
  to guarantee that the finite-time singularities have convex type.)

The Gaussian density $d_k$ of a shrinking $\Ss^k\times\RR^{n-k}$ 
 is  a strictly decreasing function of $k$. 
(See~\eqref{dn-limit} in section~\ref{Proof-One-Section}.)
Thus theorem~\ref{TopologyTheorem} implies the following:
to kill an element of the $k^{\rm th}$ homotopy of $W(t)$ requires a singularity
of Gaussian density $\ge d_k$.

\begin{bibdiv}

\begin{biblist}

\bib{Hardt-Simon-Cone}{article}{
   author={Hardt, Robert},
   author={Simon, Leon},
   title={Area minimizing hypersurfaces with isolated singularities},
   journal={J. Reine Angew. Math.},
   volume={362},
   date={1985},
   pages={102--129},
   issn={0075-4102},
   review={\MR{809969 (87f:49052)}},
   doi={10.1515/crll.1985.362.102},
}

\bib{Hsiang-I}{article}{
   author={Hsiang, Wu-Yi},
   title={Minimal cones and the spherical Bernstein problem. I},
   journal={Ann. of Math. (2)},
   volume={118},
   date={1983},
   number={1},
   pages={61--73},
   issn={0003-486X},
   review={\MR{707161 (85e:53080a)}},
   doi={10.2307/2006954},
}

\bib{Hsiang-II}{article}{
   author={Hsiang, Wu-Yi},
   title={Minimal cones and the spherical Bernstein problem. II},
   journal={Invent. Math.},
   volume={74},
   date={1983},
   number={3},
   pages={351--369},
   issn={0020-9910},
   review={\MR{724010 (85e:53080b)}},
   doi={10.1007/BF01394241},
}

\bib{Hsiang-III}{article}{
   author={Hsiang, Wu-Yi},
   author={Sterling, Ivan},
   title={Minimal cones and the spherical Bernstein problem. III},
   journal={Invent. Math.},
   volume={85},
   date={1986},
   number={2},
   pages={223--247},
   issn={0020-9910},
   review={\MR{846927 (87k:53139)}},
   doi={10.1007/BF01389089},
}

\bib{Ilmanen-Elliptic}{article}{
   author={Ilmanen, Tom},
   title={Elliptic regularization and partial regularity for motion by mean
   curvature},
   journal={Mem. Amer. Math. Soc.},
   volume={108},
   date={1994},
   number={520},
   pages={x+90},
   issn={0065-9266},
   review={\MR{1196160 (95d:49060)}},
}

\bib{Lawlor-Criterion}{article}{
   author={Lawlor, Gary R.},
   title={A sufficient criterion for a cone to be area-minimizing},
   journal={Mem. Amer. Math. Soc.},
   volume={91},
   date={1991},
   number={446},
   pages={vi+111},
   issn={0065-9266},
   review={\MR{1073951 (92d:49079)}},
}

\bib{MR0125574}{article}{
   author={Mazur, Barry},
   title={A note on some contractible $4$-manifolds},
   journal={Ann. of Math. (2)},
   volume={73},
   date={1961},
   pages={221--228},
   issn={0003-486X},
   review={\MR{0125574 (23 \#A2873)}},
}

\bib{Stone-Density}{article}{
   author={Stone, Andrew},
   title={A density function and the structure of singularities of the mean
   curvature flow},
   journal={Calc. Var. Partial Differential Equations},
   volume={2},
   date={1994},
   number={4},
   pages={443--480},
   issn={0944-2669},
   review={\MR{1383918 (97c:58030)}},
   doi={10.1007/BF01192093},
}

\bib{Stone-Boundary}{article}{
   author={Stone, Andrew},
   title={A boundary regularity theorem for mean curvature flow},
   journal={J. Differential Geom.},
   volume={44},
   date={1996},
   number={2},
   pages={371--434},
   issn={0022-040X},
   review={\MR{1425580 (97m:58050)}},
}

\bib{White-Topology}{article}{
   author={White, Brian},
   title={The topology of hypersurfaces moving by mean curvature},
   journal={Comm. Anal. Geom.},
   volume={3},
   date={1995},
   number={1-2},
   pages={317--333},
   issn={1019-8385},
   review={\MR{1362655 (96k:58051)}},
}

\bib{White-Stratification}{article}{
   author={White, Brian},
   title={Stratification of minimal surfaces, mean curvature flows, and
   harmonic maps},
   journal={J. Reine Angew. Math.},
   volume={488},
   date={1997},
   pages={1--35},
   issn={0075-4102},
   review={\MR{1465365 (99b:49038)}},
   doi={10.1515/crll.1997.488.1},
}

\bib{White-Size}{article}{
   author={White, Brian},
   title={The size of the singular set in mean curvature flow of mean-convex
   sets},
   journal={J. Amer. Math. Soc.},
   volume={13},
   date={2000},
   number={3},
   pages={665--695 (electronic)},
   issn={0894-0347},
   review={\MR{1758759 (2001j:53098)}},
   doi={10.1090/S0894-0347-00-00338-6},
}

\bib{White-Nature}{article}{
   author={White, Brian},
   title={The nature of singularities in mean curvature flow of mean-convex
   sets},
   journal={J. Amer. Math. Soc.},
   volume={16},
   date={2003},
   number={1},
   pages={123--138 (electronic)},
   issn={0894-0347},
   review={\MR{1937202 (2003g:53121)}},
   doi={10.1090/S0894-0347-02-00406-X},
}

\bib{White-Local}{article}{
   author={White, Brian},
   title={A local regularity theorem for mean curvature flow},
   journal={Ann. of Math. (2)},
   volume={161},
   date={2005},
   number={3},
   pages={1487--1519},
   issn={0003-486X},
   review={\MR{2180405 (2006i:53100)}},
   doi={10.4007/annals.2005.161.1487},
}

\bib{White-Changes}{article}{
   author={White, Brian},
   title={Topological change in mean convex mean curvature flow},
   journal={Invent. Math.},
   volume={191},
   date={2013},
   number={3},
   pages={501--525},
   issn={0020-9910},
   review={\MR{3020169}},
   doi={10.1007/s00222-012-0397-0},
}   

\bib{White-Subsequent}{article}{
   author={White, Brian},
   title={Subsequent singularities in mean-convex mean curvature flow},
   journal={Calc. Var. Partial Differential Equations},
   date={2013},
   note={To appear (preprint available on arXiv:1103.1469)},
}

\end{biblist}

\end{bibdiv}

\end{document}